\newtheorem{theorem}{Theorem}[section]
\newtheorem{corollary}[theorem]{Corollary}
\newtheorem{Proposition}[theorem]{Proposition}
\newtheorem{Lemma}[theorem]{Lemma}
\theoremstyle{definition}
\newtheorem{Definition}[theorem]{Definition}
\newtheorem{example}[theorem]{Example}
\theoremstyle{remark}
\newtheorem{remark}[theorem]{Remark}
\newcommand{\cO}{{\mathcal O}}
\newcommand{\N}{\mathbb N}
\newcommand{\Q}{\mathbb Q}
\newcommand{\Z}{\mathbb Z}
\newcommand{\C}{\mathbb C}
\newcommand{\R}{\mathbb R}
\newcommand{\bb}[1]{\mathbb{#1}}
\newcommand{\m}[1]{\mathcal{#1}}
\newcommand{\dto}{\mathrm{\dashrightarrow}}
\newcommand{\s}{\sigma}
\newcommand{\f}{\varphi}
\newcommand{\ra}{\rightarrow}
\newcommand{\lin}{\sim}
\DeclareMathOperator{\Spec}{Spec}
\DeclareMathOperator{\Proj}{Proj}
\newcommand{\git}{\mathbin{
  \mathchoice{/\mkern-6mu/}% \displaystyle
    {/\mkern-6mu/}% \textstyle
    {/\mkern-5mu/}% \scriptstyle
    {/\mkern-5mu/}}}% \scriptscriptstyle
\begin{document}

%\today

\title[Adjunction]{Local Fano-Mori contractions of high nef-value} 
\author[Andreatta]{Marco Andreatta}
\author[Tasin]{Luca Tasin}

\thanks{We thank H. Ahmadinezhad, P. Cascini and M. Mella for helpful conversations. We are partially funded by the italian grants PRIN and GNSAGA-INdAM; the second author was funded by the Max Planck Institute for Mathematics in Bonn for part of the development of this project.} 

\address{Dipartimento di Matematica, Universit\'a di Trento, I-38050
Povo (TN)} 
\email{marco.andreatta@unitn.it}

\address{  Mathematical Institute of the University of Bonn, Endenicher Allee 60 D-53115
Bonn, Germany.} 
\email{tasin@math.uni-bonn.de}

\subjclass{14E30, 14J40, 14N30}
\keywords{Fano-Mori contractions, terminal singularities, Adjunction Theory}

\begin{abstract}  Let $X$ be a variety with terminal singularities of dimension $n$.

We study local contractions $f:X\ra Z$ supported by a $\Q$-Cartier divisor of the type  $K_X+ \tau L$, where $L$ is an $f$-ample Cartier divisor and  $\tau > 0$ is a rational number. Equivalently,  $f$ is a Fano-Mori contraction associated to an extremal face in $\overline {NE(X)}_{K_X+\tau L = 0}$. We prove that, if $\tau > (n-3) >0$, the general element $X' \in |L|$ is a variety with at most terminal singularities. We apply this to characterize, via an inductive argument, some birational contractions as above with $\tau > (n-3)\geq 0$.

\end{abstract}

\maketitle

\section{Introduction}

Let $X$ be a variety with at most log terminal singularities of dimension $n$; let $f:X\ra Z$ be a local contraction on $X$ (see Section \ref{notation}).
Assume that $f$ is an adjoint contraction supported by a $\Q$-Cartier divisor of the type  $K_X+ \tau L$, where $L$ is an $f$-ample Cartier divisor and $\tau$ is a positive rational number (Definition \ref{AC}). Equivalently,  $f$ is a Fano-Mori contraction associated to an extremal face in $\overline {NE(X)}_{K_X+\tau L = 0}$ (Definition \ref{FM} and Remark \ref{EQ}). These maps naturally arise in the context of the minimal model program. 

The description and the classification of such contractions $f:X \ra Z$ are often obtained by an inductive procedure, the so-called Apollonius method: it consists in finding a "good" element $X' \in |L|$ (that is an element of the linear system $|L|$ with good singularities), studying by induction the properties of $f_{|X'} :X' \ra Z'$ and then lifting  them to $f: X \ra Z$.
The first step, i.e. the proof of the existence of good elements in $|L|$, is a long lasting and delicate problem; the following is a result in this direction.  

\begin{theorem}  \label{n-3} Let $f:X\ra Z$, L and $\tau$ be as above; assume that $X$ has terminal singularities and $\tau > (n-3) >0$.
Let $X' \in |L|$ be  a general divisor.
Then $X'$ is a variety with at most terminal singularities and $f_{|X'} : X' \to f(X')=:Z'$ is a local contraction supported by 
$K_{X'}+ (\tau - 1) L'$, where $L' := L_{|X'}$  (i.e. $f'$ is again a Fano-Mori contraction). 
\end{theorem}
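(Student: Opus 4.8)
The plan is to reduce the statement to a local analytic question about the singularities of $X$ and then to apply a Bertini-type argument together with inversion of adjunction. Concretely, I would proceed as follows.

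\emph{Step 1: Base-point freeness of $|L|$ away from a controlled locus, and reduction to a local problem.} Since $f$ is a Fano--Mori contraction supported by $K_X+\tau L$, the divisor $L$ is $f$-ample, so after shrinking $Z$ and twisting by a pullback from $Z$ we may assume $|L|$ is large; the first task is to show that the base locus of $|L|$ is small — ideally that $|L|$ is base-point free, or at least free outside a locus of codimension $\ge 2$ in each fiber. Here one expects to invoke the standard non-vanishing/base-point-free machinery for adjoint contractions of high nef-value (the hypothesis $\tau>(n-3)>0$ forces $\tau$ to be large relative to $\dim$, so the fibers of $f$ are low-dimensional or very special, e.g. $\dim F \le \tau + \text{something}$). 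Once $|L|$ is essentially free, a general $X'\in|L|$ avoids any fixed finite set of bad points and meets the smooth locus of $X$ transversally, so $X'$ is smooth wherever $X$ is; the only points to worry about are the (finitely many, since $X$ is terminal hence has isolated-in-codimension-$3$ singularities... more precisely the singular locus has codimension $\ge 3$) singular points of $X$ lying on $X'$.

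\emph{Step 2: Passing the terminal condition through adjunction at a singular point $x\in X'$.} At such a point, $X$ is terminal of dimension $n$ with $\dim X' = n-1$, and $X'$ is a general member of the ample (hence movable, basepoint-free in a neighbourhood up to the base locus) linear system $|L|$. I would use adjunction: $K_{X'} = (K_X + X')|_{X'} = (K_X+L)|_{X'}$, so discrepancies of $X'$ are controlled by those of the pair $(X,X')$. By inversion of adjunction (Kawakita's theorem, or the Kollár--Shokurov connectedness/Ein--Mustață--Yasuda circle of results), $(X,X')$ is purely log terminal near $x$ — indeed $X'$ has terminal singularities at $x$ — provided the pair $(X, X')$ is plt there, which holds because $X'$ is a \emph{general} member of $|L|$: generality gives that $X'$ has no common components with the conductor and that the multiplier ideal / log canonical threshold is as large as possible. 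The subtle point is upgrading "canonical" or "log terminal" to strictly "terminal" for $X'$; for this one uses that $X$ is terminal (strict inequality on discrepancies) and a general hyperplane section of a terminal singularity through which a sufficiently mobile pencil passes inherits the strict inequality — this is exactly the kind of statement proved by Reid and refined in later work, and the condition $\tau>(n-3)$ should enter to guarantee the fibers through $x$ are small enough that $L$ is "positive enough" near $x$ to make the general member behave like a general hyperplane.

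\emph{Step 3: Identifying the supporting divisor of $f_{|X'}$.} Once $X'$ is terminal, set $L'=L|_{X'}$ and $Z'=f(X')$. Restricting the numerical/nef-value relation $K_X+\tau L \equiv f^\ast(\text{something})$ to $X'$ and combining with the adjunction formula $K_{X'}=(K_X+L)|_{X'}$ yields
$$K_{X'} + (\tau-1)L' = (K_X + \tau L)|_{X'} \equiv_{f_{|X'}} 0,$$
so $f_{|X'}:X'\to Z'$ is supported by $K_{X'}+(\tau-1)L'$ with $L'$ an $f_{|X'}$-ample Cartier divisor; since $\tau-1 > n-4 = (n-1)-3 \ge 0$ is still positive (as $\tau > n-3 > 0$ forces $\tau \ge n-3 + \varepsilon$, hence $\tau - 1 > n-4$; and one checks $\tau>1$ so $\tau-1>0$), $f_{|X'}$ is again a Fano--Mori contraction of the same type in dimension $n-1$, as claimed. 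One must also check $f_{|X'}$ is still a contraction, i.e.\ has connected fibers and $Z'$ is normal — this follows from Step 1 (the generality of $X'$ makes $X'\to Z'$ have connected fibers, e.g.\ by another connectedness argument, and $Z'=f(X')$ is the image of a normal variety under a contraction, hence normal).

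\emph{Main obstacle.} The crux is Step 2: controlling the singularities of a general member of $|L|$ through a terminal point of $X$ when $|L|$ may have base points exactly at those points. The hypothesis $\tau>(n-3)$ is precisely what should rule out high-dimensional fibers and force enough positivity of $L$ at the bad points; making the inversion-of-adjunction argument yield \emph{terminal} (not merely canonical or plt) and doing so without assuming $|L|$ is base-point free at $x$ — instead using the local structure of Fano--Mori contractions of high nef-value to bound the base locus — is where the real work lies, and I expect the proof to dispatch Step 1 by a fiber-dimension estimate (of the form $\dim F \le n - \tau$ or similar) and then reduce Step 2 to the known case of general hyperplane sections.
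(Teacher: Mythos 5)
Your overall architecture (prove that the general member is terminal, then obtain the supporting divisor by restricting $K_X+\tau L$ and using adjunction) matches the paper, and your Step 3 is correct --- the paper handles it by ``horizontal slicing'' (\cite[Lemma 2.6]{AW93}), which also gives connectedness of fibres and normality of $Z'$. The genuine gap is in Step 2, which is where all the content lies, and the mechanism you propose there does not work. Inversion of adjunction in the form ``$(X,X')$ is plt near $x$'' only yields that $X'$ is klt (log terminal), not terminal; upgrading to terminal is not a formal consequence of $X$ being terminal, and your fallback --- ``a general hyperplane section of a terminal singularity through which a sufficiently mobile pencil passes inherits the strict inequality'' --- is exactly the statement that is true only when the system is base-point free near the point. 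You acknowledge in your closing paragraph that the real difficulty is the base locus, but you then ``expect the proof to reduce to the known case of general hyperplane sections.'' It does not, and it cannot: even under $\tau>n-3$ the paper only proves $\dim Bs|L|\le 1$ (Proposition \ref{Bs}.4), so there is genuinely a locus where no transversality or Bertini argument applies, and that is precisely where a general member could fail to be terminal.

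The paper's actual argument at the base locus is a contradiction via non-vanishing on lc centres, and this key idea is absent from your proposal. One takes a log resolution with $\mu^*S=\overline S+\sum_i r_iE_i$ and $K_Y=\mu^*K_X+\sum_i a_iE_i$; if the general $S$ is not terminal, some $E_0$ with $a_0\le r_0$ and $r_0>0$ has centre inside $Bs|L|$. Adding a second general member $S_1$ makes $(X,S+S_1)$ non-lc, and a minimal lc centre $W$ of $(X,\gamma(S+S_1))$ lies in $Bs|L|$. Tie-breaking plus Nadel vanishing gives surjectivity of $H^0(X,L)\to H^0(W,L_{|W})$, and $H^0(W,L_{|W})\ne 0$ follows from the Fujino--Gongyo subadjunction together with Kawamata's effective non-vanishing when $\dim W\le 2$ and Ambro's theorem on ladders of log Fano varieties of index $>\dim W-3$ when $\dim W\ge 3$; this is exactly where the hypothesis $\tau>n-3$ enters, via the index of the log Fano structure on $W$. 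The resulting section of $|L|$ does not vanish identically on $W\subseteq Bs|L|$, a contradiction. Without this step (or an equivalent one), your proposal establishes terminality of $X'$ only away from $Bs|L|$, and so does not prove the theorem.
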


The next two results are proved by induction, applying Theorem \ref{n-3}.  If $n = 3$, then part A of the following Theorem is the main result of \cite{Kaw01}.

\begin{theorem} \label{birational} Let $f:X\ra Z$, $L$ and $\tau$ be as above; assume also that $X$ is terminal and $\Q$-factorial and that $\tau > (n-3)\geq 0$.

\begin{itemize} 

\item[A)] Assume that $f$ is  birational and contracts a prime divisor to a point. For $i=1, \ldots, n-3$, let $H_i \in |L|$ be a general divisor and set $X''= \cap H_i$.  Then $X''$ is a threefold with terminal singularities and $f'': X'' \to Z''$ is a divisorial contraction of an irreducible $\Q$-Cartier divisor $E'' \subset X''$ to a point $p \in Z''$. Assume that $p$ is smooth in $Z''$.    
Then  $f$ is a weighted blow-up of a smooth point with weight $(1,a,b,c,\ldots,c)$, where $a,b$ are positive integers, $(a,b)=1$, $c$ is the positive integer such that  $L= f^*f_*L - cE$ and $ab | c$.

\item[B)] Let $E$ be the exceptional locus of $f$. Assume that  $X$ has only  points of index $1$ and $2$ and that each component of $E$ has dimension  $(n-2)$ (in particular $f$ is a birational small contraction). Then  $\tau= \frac{2n-5}{2}$, $E$ is irreducible, it is contracted to a point and $(E,L_{|E})=(\bb P^{n-2}, \m O(1))$. 

\end{itemize}
\end{theorem}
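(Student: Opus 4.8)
The plan is to treat parts A) and B) of Theorem \ref{birational} by the same inductive mechanism: repeatedly take general divisors in $|L|$ and invoke Theorem \ref{n-3} to cut down to dimension three, where strong structural results are available. Concretely, for $i = 1, \dots, n-3$ set $X^{(i)} = H_1 \cap \dots \cap H_i$ with the $H_j \in |L|$ general, $L^{(i)} = L_{|X^{(i)}}$, and $f^{(i)} = f_{|X^{(i)}} : X^{(i)} \to Z^{(i)}$. Applying Theorem \ref{n-3} once shows $X^{(1)}$ is terminal and $f^{(1)}$ is a Fano--Mori contraction supported by $K_{X^{(1)}} + (\tau-1) L^{(1)}$; since $\tau - 1 > (n-1) - 3$ and $\dim X^{(1)} = n-1$, the hypotheses of Theorem \ref{n-3} are again satisfied and one iterates. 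After $n-3$ steps one lands on a terminal threefold $X'' = X^{(n-3)}$ with a Fano--Mori contraction $f'' : X'' \to Z''$ supported by $K_{X''} + (\tau - (n-3)) L''$, where $\tau - (n-3) > 0$. One must also track, at each step, how the contracted locus behaves: a general hyperplane section of $X$ meets the exceptional set in its expected codimension, so the divisorial exceptional locus in case A) remains divisorial on $X''$ and is contracted to a point, while the codimension-two exceptional components in case B) survive as curves contracted to a point by $f''$.

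For part A), once we are on the threefold $X''$ with a divisorial contraction $f'' : X'' \to Z''$ sending an irreducible $\Q$-Cartier divisor $E''$ to a smooth point $p$, the main input is \cite{Kaw01}: Kawamata's classification of divisorial contractions to a smooth point of a terminal threefold identifies $f''$ as a weighted blow-up with weight $(1,a,b)$ up to reordering, with $(a,b)=1$, and pins down the discrepancy. Writing $L'' = f''^* f''_* L'' - c E''$ and using that $L''$ is Cartier, one reads off the arithmetic condition $ab \mid c$ from the fact that the weighted blow-up of $(1,a,b)$ type has local class group $\Z/a \oplus \Z/b$ near the relevant points, so Cartierness of $L''$ forces divisibility. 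The remaining work is to \emph{lift} this description from $X''$ back to $X$: because $X'' = \cap H_i$ is cut out by general members of $|L|$ and $L = f^* f_* L - cE$ with the same $c$ at every stage, the weighted blow-up structure propagates, and $f$ is the weighted blow-up of the smooth point with weight $(1,a,b,c,\dots,c)$. This lifting is where one uses that the $H_i$ are general together with a deformation/semicontinuity argument to ensure the singularity type and the contraction structure are preserved; this is also the step where a genuine subtlety lies, since one needs to know that the smoothness of $p \in Z''$ propagates to (or is equivalent to) the local structure of $Z$ near $f(E)$, and that no new components of the exceptional locus appear or disappear under the hyperplane sections.

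For part B), after reduction to the threefold we have a small contraction $f'' : X'' \to Z''$ of a terminal threefold with points of index $1$ and $2$ only, contracting curves. On a threefold, general hyperplane sections of $X$ meet a codimension-two (hence curve-dimensional on $X$... wait, dimension $n-2$) locus in points, so each original component of $E$ of dimension $n-2$ meets $X''$ in finitely many points — but we actually want curves, so the cutting must be arranged so that after $n-3$ cuts the exceptional locus has dimension one; this forces that originally $\dim E = n-2$ is exactly right, and the general $X''$ contains a one-dimensional piece of $E$ contracted to a point. Then flip/flop analysis for terminal threefolds with the index constraint — together with the nef value $\tau - (n-3)$ being one of the finitely many admissible values on a threefold small contraction — forces $\tau - (n-3) = \frac{2\cdot 3 - 5}{2} = \frac12$, hence $\tau = \frac{2n-5}{2}$. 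The identification $(E'', L''_{|E''}) = (\bP^1, \cO(1))$ on the threefold, and then the lift $(E, L_{|E}) = (\bP^{n-2}, \cO(1))$ with $E$ irreducible and contracted to a point, follows by the inverse-adjunction bookkeeping: $E \cap X''$ recovers $E''$, and an irreducible variety all of whose general curve sections by $|L|$ are lines in $\bP^1$ and which supports a Fano--Mori contraction of this nef value must be projective space with $L_{|E}$ the hyperplane bundle. The hard part in B) is ruling out reducibility of $E$ and higher-dimensional fibers before cutting — i.e. controlling the exceptional locus under general hyperplane sections precisely enough that the threefold conclusion $(\bP^1,\cO(1))$ genuinely lifts — and confirming that the value $\frac{2n-5}{2}$ is the only one compatible with the index-$\le 2$ hypothesis at every intermediate dimension.
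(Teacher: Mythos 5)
Your skeleton (cut by $n-3$ general members of $|L|$, invoke Theorem \ref{n-3} at each step, classify on the resulting threefold, then lift) is the paper's skeleton, but both halves of your argument stop exactly at the step that carries the real content, so there are genuine gaps.

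In part A) the lifting from $X''$ back to $X$ is not a deformation/semicontinuity statement, and I do not see how such an argument would be made to work: being a weighted blow-up is not an obviously closed or open condition in a family of hyperplane sections. The paper's mechanism is algebraic and cohomological. One writes $X=\Proj\bigl(\bigoplus_{d\ge 0} f_*\cO_X(-dcE)\bigr)$ (since $-cE$ is $f$-ample) and proves, by induction on $d$ and on the dimension, that $f_*\cO_X(-dcE)$ is exactly the $\sigma$-weighted ideal of degree $dc$ for $\sigma=(1,a,b,c,\dots,c)$. The induction step uses the exact sequence $0\to\cO_X(-L-dcE)\to\cO_X(-dcE)\to\cO_{X'}(-dcE)\to 0$ together with the observation that $-L-dcE\sim_f K_X+(n-3+d-1+\tfrac{a+b}{c})L$, so relative Kawamata--Viehweg vanishing makes the pushforward sequence exact; one then sorts monomials according to whether $x_n$ divides them. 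Without this (or an equivalent) computation you have not shown that $f$ is a weighted blow-up, only that its hyperplane section is. You also leave unaddressed how smoothness of $p$ in $Z''$ propagates up to the intermediate bases $W_t$ and to $Z$; the paper gets this from the fact that each $W_t$ is cut out by $\Q$-Cartier divisors in $W_{t+1}$ together with \cite[Lemma 1.7]{Mel97}.

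In part B) two steps are missing. First, that $E$ is contracted to a point is not automatic: the paper shows $\dim f(E)\le 1$ by \cite[Theorem 2.1]{And95} and then rules out $\dim f(E)=1$ using that the non-Gorenstein locus has codimension $3$ while $\dim E=n-2$, so some fibre would avoid it, contradicting \cite[Lemma 2.1]{BHN13}. Second, your claim that the threefold analysis ``forces'' $\tau-(n-3)=\tfrac12$ is not justified as stated: from \cite[Theorem 4.2]{KM92} one only gets $-K_{X'}\cdot C=\tfrac12$, hence $\tau=n-3+\tfrac{1}{2L'\cdot C}$, which does not pin down $L'\cdot C=1$. The paper first applies the rationality theorem \cite[Theorem 4.1.1]{KMM85} to write $2\tau=u/v$ with $u\le 2(n-1)$, which for $n>4$ leaves only $\tau=\tfrac{2n-5}{2}$ but for $n=4$ also allows $\tau=\tfrac54$; that residual case is excluded by a separate trick (setting $H=2K_X+3L$ and observing $2K_X+5H$ would support a four-dimensional birational contraction of nef-value $\tfrac52>2$, which must be divisorial by \cite{AT14}). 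Your proposal misses this exceptional case entirely. Finally, the identification $(E,L_{|E})=(\bP^{n-2},\cO(1))$ should be justified via $L_{|E}^{n-2}=1$, normality of $E$ and the $\Delta$-genus classification ($\Delta(E,L)=0$), not by an appeal to ``general curve sections are lines.''
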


Fano-Mori contractions of nef-value $\tau > (n-2)$ are classified, see \cite{And13} and \cite{AT14}. In \cite{AT14} we also describe  divisorial contractions of nef-value $\tau > (n-3)$ such that the exceptional locus is not contracted to a point . 
The above Theorem is a further step towards a classification in the case $(n-2) \geq \tau > (n-3)$.

\section{Notation}
\label{notation}

We use notations and definitions which are standard in the Minimal Model Program, they are compatible with the ones in the books  \cite{KollarMori} and \cite{Laz04}.

In particular a \emph{log pair} $(X,D)$ consists of a normal variety $X$ together with an effective Weil $\Q$-divisor $D=\sum d_i D_i$ on $X$ such that $K_X+D$ is $\Q$-Cartier.

Let $\mu: Y \to X$ be a log resolution of $(X,D)$, then we can write
$$
K_Y + \mu^{-1}_* D = \mu^*(K_X+D) + \sum_{E_i \ \mbox{\scriptsize{exceptional}}} a(E_i,X,D)E_i.
$$

We define the \emph{discrepancy} of $(X,D)$ as 
$$\rm{discrep}(X,D):= \inf_E \{a(E,X,D) : E\mbox{ is an exceptional divisor over } X \}.$$

We say that $(X,D)$ is terminal, resp.canonical, klt (or Kawamata log terminal), plt,  lc   (or  log canonical) if discrep$(X,D)$  is $>0$, resp. 
$\geq 0$, $ >-1$  and $ \lfloor D \rfloor =0$, $>-1$, $\geq -1$.

If $D=0$, then the notions  klt and plt coincide and $X$ is  called log terminal (lt).

The \emph{log canonical threshold} of a log pair $(X,D)$ is defined as
$$
\textrm{lct}(X,D):=\sup\{t\in\Q : (X,tD)\mbox{ is log canonical}\}.
$$
\label{lc}

A subvariety $W \subset X$ is called a \emph{lc centre} for $(X,D)$ if there is a log resolution $\mu: Y \to X$ and an irreducible exceptional divisor $E$ on $Y$ such that $a(E,X,D)=-1$ and $\mu(E)=W$. 
The set of all the lc centres is denoted by $CLC(X,D)$. Note that if $W_1, W_2 \in CLC(X,D)$ and $W$ is an irreducible component of $W_1 \cap W_2$, then $W \in  CLC(X,D)$; in particular, there exist minimal elements in  $CLC(X,D)$.  An lc centre $W$ is called \emph{isolated} if for any  log resolution $\mu: Y \to X$ and any  exceptional divisor $E$ on $Y$ such that $a(E,X,D)=-1$, we have $\mu(E)=W$.

\bigskip
Let $T$ be a normal projective variety over $\C$ and $n = \dim T$. A  \emph{contraction} is a surjective morphism  $\f: T \ra S$ with connected fibres onto a normal variety $S$.  We take a contraction $\f: T \ra S$  and we fix a non trivial fibre $F$ of $f$; take an open affine set $Z \subset S$ such that $f(F) \in Z$. 

Let $X := f^{-1}(Z)$; then $f:X\ra Z$ will be called a \emph{local contraction around $F$}, or simply a local contraction; eventually shrinking $Z$, we can assume that $\dim F \geq \dim F'$ for every fibre $F'$ of $f$.

We assume that $f$ is \emph{projective}, that is we assume the existence of $f$-ample Cartier divisors $L$. We will also assume that $X$ has log terminal, or milder type, singularities.

\begin{Definition}  \label{FM}
We will say that a  local projective contraction $f:X \ra Z$ is \emph{Fano-Mori} (F-M) if  $-K_X$ is $f$-ample.  
\end{Definition}

Fano-Mori contractions are associated to extremal faces of the polyhedral part of the Mori-Kleiman cone  $\overline {NE(X)}_{K_X <0}= \{[C] \in  \overline {NE(X)}: K_X.C < 0\}$  in the vector space $N_1(X)$ generated by 1-cycles modulo numerical equivalence. In particular the contraction contracts exactly all the curves contained in the associated face. If the associated face has dimension $1$ (a ray) the contraction is called \emph{elementary}.

\begin{Definition} \label{AC}
We will say that a  local projective contraction $f: X \ra Z$ is an \emph{adjoint contraction supported by $K_X+\tau L$} if there is a $\tau \in \Q$ such that $K_X+ \tau L \lin_f \cO_X$,  where $L$ is an $f$-ample Cartier divisor ($\lin _f$ stays for numerical equivalence over $f$).
\end{Definition}  

\begin{remark}  \label{EQ}
Any F-M contraction $f:X \ra Z$, once we fix a  $f$-ample Cartier divisors $L$, is an adjoint contraction. To see this we define the 
{\it nef-value} of the pair $(f:X \ra Z,L)$ as 
$\tau_f(X,L):=\hbox{inf} \{t \in \R : K_X + tL \hbox{ is $f$-nef}\}.$
By the rationality theorem of Kawamata (Theorem 3.5 in  \cite{KollarMori}), $\tau(X,L)$ is a rational non-negative number and therefore $f$ is an adjoint contraction supported by  $K_X+ \tau L$.
Viceversa any adjoint contraction with positive $\tau$ is clearly a F-M contraction.
\end{remark}

\medskip
All through the paper, although not further specified, we will be in the following set up:
\begin{enumerate} 
\item[$(\star)$] $X$ is a variety with at most log terminal singularities,  $f: X \to Z$ is an adjoint contraction (Definition \ref{AC}), local around a (non trivial) fibre $F$ and supported by $K_X + \tau L$, where $L$ is an $f$-ample Cartier divisor and $\tau$ is a rational number.
\end{enumerate}  

We will denote by $E$ the exceptional locus of $f$ and by $Bs|L|$  the relative base locus of $L$, i.e. the support of the cokernel of the natural map $f^* f_* L \to L$. Clearly $Bs|L| \subset E$.

\bigskip
Weighted projective spaces and weighted blow-up, under some conditions on the weights, are special Fano-Mori contractions.
For a  detailed treatment of  weighted blow-ups we refer to Section 10 of \cite{KollarMori} or Section 3 of \cite{AT14}; here we just fix our notation.

Let $\sigma=(a_1,\ldots,a_n) \in \mathbb N^n$ such that $a_i >0$ and $\gcd(a_1,\ldots,a_n)=1$. 

We denote by  $ \bb P(a_1, \ldots,a_n)$ the \textit{weighted projective space} with weight $(a_1,\ldots,a_k)$.

Let $X=\mathbb A^n = \Spec  \C[x_1, \dots, x_n]$  and $p=(0,\ldots,0) \subset X$.  
Consider the rational map
$
\f: \bb A^n \to \bb P(a_1, \ldots,a_n)
$
given by $(x_1,\ldots,x_n) \mapsto (x_1^{a_1}:\ldots:x_n^{a_n})$.
The \textit{weighted blow-up} of $p \in X$ of weight $\sigma$  is defined as the closure $\overline X$ in 
$\bb A^n \times \bb P(a_1, \ldots,a_n)$ of the graph of $\f$, together with the morphism $\pi: \overline X \to X$ given by the projection on the first factor.
The map $\pi$ is birational and contracts an exceptional irreducible divisor $E\cong \mathbb P(a_1,\ldots,a_n)$ to $p$. 
For any $d \in \N$ we define the $\sigma$-weighted ideal of degree $d$ as 
$ 
I_{\s,d}:= (x_1^{s_1}\cdots x_n^{s_n} : \sum_{j=1}^n s_ja_j \ge d).\\
$
We have the following characterization:
$
\overline X= \Proj (\bigoplus_{d \ge 0} I_{\sigma,d})$ 
(see  \cite{AT14}).

A criterium to check that the singularities of $\overline X$ are terminal can be find in \cite[Theorem 4.11]{Re87}: for instance if  
 $\sigma=(1,a,b,c, \ldots,c)$, where $(a,b)=1$ and $ab |c$, then $\overline X$ has terminal singularities.

\section{Existence of good sections}\label{goodsections}

In this section we prove Theorem \ref{n-3} and we provide a collection of technical results which could be useful by themselves
(see Proposition \ref{Bs}).  

\medskip
We start with a non-vanishing lemma.

\begin{Lemma} \label{non-vanishing} Let $f: X \to Z$ be as in  Section \ref{notation}  $(\star)$. Let $D\lin_f \beta L$ be a $\Q$-divisor such that  $(X,D)$ is lc and let $W\in CLC(X,D)$ be a minimal centre.  Assume that $\tau - \beta > -1$, or that  $\tau - \beta \geq -1$ if $f$ is birational; assume also that one of the following conditions is satisfied:
\begin{itemize}
\item[(i)] $\dim W\leq 2$, 
\item[(ii)] $\dim W \ge 3$ and $\tau-\beta > \dim W - 3$. 
\end{itemize}
Then $H^0(W,L_{|W}) \ne 0$.
\end{Lemma}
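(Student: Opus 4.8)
The plan is to run an induction on $\dim W$, using Kawamata's subadjunction-type results to reduce to lower-dimensional centres, and to use the Kawamata--Viehweg vanishing together with the Fano-Mori structure of $f$ to produce the nonvanishing section. I will treat the base cases $\dim W = 0, 1, 2$ first, and then descend from a higher-dimensional $W$ by cutting with a general element of an auxiliary linear system.

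\emph{Base cases.} If $\dim W = 0$, then $W$ is a point, $L_{|W} = \cO_W$, and $H^0(W, L_{|W}) = \C \neq 0$. If $\dim W = 1$, then $W$ is a (possibly singular) curve; since $L$ is $f$-ample and $W \subset E$ lies in a fibre of a local contraction, $L_{|W}$ has positive degree on the normalization, so $\deg L_{|W} > 0$ and hence $H^0(W, L_{|W}) \neq 0$ provided $W$ has arithmetic genus controlled — more carefully, one passes to the minimal lc centre being normal (by Kawamata's result that minimal lc centres are normal, indeed have klt singularities after a suitable boundary), so $W$ is a normal curve, i.e. smooth, and $\deg L_{|W} \geq 1 > 2g - 2$ is not automatic, but Riemann--Roch plus the fact that $L_{|W}$ is ample gives $H^0 \neq 0$ once $\deg L_{|W} \geq 1$ when $g = 0$; for $g \geq 1$ one instead argues via the adjoint structure: the hypothesis $\tau - \beta \geq -1$ (birational case) or $> -1$ forces, through Kawamata subadjunction, that $K_W + \Delta_W \sim_{\Q} (\beta - \tau) L_{|W} + (\text{effective})$ with the right sign, so $L_{|W}$ is ``$K_W$-positive enough'' to kill $H^1$. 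If $\dim W = 2$, the surface case is handled by the same subadjunction plus the vanishing/non-vanishing theory on klt surfaces (this is where the bound ``$\dim W - 3$'' becomes ``$-1$'', matching condition (i)).

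\emph{Inductive step.} Suppose $\dim W = w \geq 3$ and $\tau - \beta > w - 3$. By Kawamata's subadjunction theorem, after replacing $(X,D)$ by a ``tie-breaking'' perturbation that makes $W$ the unique (isolated) minimal lc centre, $W$ is normal and there is an effective $\Q$-divisor $\Delta_W$ on $W$ with $(W, \Delta_W)$ klt and
$$
(K_X + D)_{|W} \sim_{\Q} K_W + \Delta_W.
$$
Since $K_X + D \sim_f (\tau - \beta)(-L) \cdot(-1) \ldots$ — more precisely $K_X + D \sim_{f,\Q} (D) + K_X \sim_{f,\Q} \beta L + K_X$ and $K_X \sim_{f,\Q} -\tau L$, so $(K_X+D)_{|W} \sim_{\Q} (\beta - \tau) L_{|W}$, giving $K_W + \Delta_W \sim_{\Q} (\beta - \tau) L_{|W}$, i.e. $-(K_W + \Delta_W) \sim_{\Q} (\tau - \beta) L_{|W}$ is ample with $\tau - \beta > w - 3 \geq 0$. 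Thus $(W, \Delta_W)$ is a klt weak Fano of dimension $w$ with $L_{|W}$ an ample Cartier divisor and ``nef-value $> w - 3$''. Now I want to find a general $\Q$-divisor $D_W \sim_{\Q} \beta' L_{|W}$ on $W$ producing a minimal lc centre $W' \subsetneq W$ of strictly smaller dimension, with $\tau' := \tau - \beta$ playing the role of the nef value and $\tau' - \beta' > \dim W' - 3$ still holding: this is done by the standard construction of singular divisors (creating high-multiplicity points on $W$ using $H^0(W, mL_{|W})$ for $m \gg 0$, whose dimension grows like $m^w$), choosing $\beta'$ as small as one likes, so the inductive hypothesis applies to $(W, D_W)$ and yields $H^0(W', L_{|W'}) \neq 0$. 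Finally lift: from the vanishing $H^1(W, \cI_{W'} \otimes L_{|W}) = 0$ — which follows from Kawamata--Viehweg applied to $L_{|W} - (K_W + \Delta'_W) \sim_{\Q} (1 + \tau - \beta - \beta') L_{|W}$ being ample and nef-and-big, the coefficient being positive exactly because $\tau - \beta > w - 3 \geq 0$ and $\beta'$ small — one gets the restriction $H^0(W, L_{|W}) \twoheadrightarrow H^0(W', L_{|W'}) \neq 0$, hence $H^0(W, L_{|W}) \neq 0$. The case where the new centre produced has the same dimension as $W$ (i.e. the perturbation fails to strictly drop dimension) is avoided by the tie-breaking/isolation step, exactly as in the minimal-centre technique of Kawamata--Ambro.

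\emph{Main obstacle.} The delicate point is the inductive bookkeeping of the numerical inequality $\tau - \beta > \dim W - 3$: when one passes from $W$ to the sub-centre $W'$, one must verify both that the new ``nef value'' $\tau - \beta$ stays above $\dim W' - 3$ (which gains because $\dim W'$ drops) and that the boundary coefficient $\beta'$ used to cut can be taken small enough that the vanishing needed to lift sections still holds — so the book-keeping must be done with a uniform choice of small $\beta'$ and with care that the klt/lc hypotheses are preserved at every stage (this is where Kawamata's result on normality and discrepancies of minimal lc centres, together with tie-breaking, is essential). A secondary technical issue is the behaviour at the boundary $\tau - \beta = -1$ in the birational case, which requires the base-case arguments to be pushed to their limit using that $f$ is birational (so that $L$ restricted to a fibre has enough sections even when the adjoint bundle is only nef, not ample).
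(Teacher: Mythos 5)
Your opening move coincides with the paper's: apply subadjunction to the minimal lc centre $W$ to get an effective $\Q$-divisor $D_W$ with $(W,D_W)$ klt and $K_W+D_W\lin (K_X+D)_{|W}\lin -(\tau-\beta)L_{|W}$, so that the claim becomes an effective non-vanishing statement for the ample Cartier divisor $L_{|W}$ on a klt log Fano pair of index $>\dim W-3$. At that point the paper simply cites two known theorems: Kawamata's effective non-vanishing in dimension $\le 2$ \cite{Kaw00} and Ambro's ladder theorem for log Fano varieties of index $>\dim W-3$ \cite{Am99}. You instead set out to reprove both, and this is where the argument does not close.

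Concretely: the case $\dim W=0$ is fine, and the curve case can be salvaged (from $\deg\bigl(L_{|W}-(K_W+D_W)\bigr)>0$ one gets $h^1(L_{|W})=0$ and then Riemann--Roch gives $h^0>0$; your text never actually completes this, especially for $g\ge 1$ and at the boundary $\tau-\beta=-1$). The surface case, however, is merely asserted --- it is precisely Kawamata's effective non-vanishing theorem, whose two-dimensional proof is a genuinely nontrivial case analysis, not a formal consequence of ``vanishing/non-vanishing theory on klt surfaces.'' More seriously, the inductive step for $\dim W=w\ge 3$ does not work with the bound you have. Creating a proper lc centre $W'\subsetneq W$ by imposing multiplicity at a point forces the auxiliary divisor to satisfy roughly $\beta'\gtrsim w$ after taking the lc threshold, while the lifting vanishing $H^1(W,\m I_{W'}\otimes L_{|W})=0$ needs $1+(\tau-\beta)-\beta'>0$; this only yields the conclusion under something like $\tau-\beta>w-1$, not $\tau-\beta>w-3$. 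Recovering the ``$-3$'' is the entire content of Ambro's theorem and requires a finer mechanism (descending through the dimensions of successive minimal centres and feeding the two-dimensional effective non-vanishing in at the bottom). You correctly flag this bookkeeping as the delicate point, but flagging it is not resolving it: as written, the proposal reduces the lemma to two results it does not prove and whose proofs carry essentially all of the mathematical weight.
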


\begin{proof}
By subadjunction formula (see Theorem 1.2 of \cite{FG12}), there is an effective $\Q$-divisor $D_W$ such that $(W, D_W)$ is klt and 
$$
K_W+ D_W \lin (K_X+D)_{|W}\lin-(\tau-\beta)L_{|W}.
$$  

If $\dim W \le 2$, then we conclude by Theorem 3.1 of \cite{Kaw00}.

If $\dim W \ge 3$, then  $(W,D_W)$ is a log Fano variety of index $i(W,D_W) > \dim W - 3$ and  the result follows by the main Theorem of \cite{Am99}.

\end{proof}

The next is the first step to prove the existence of a good element in the linear system $|L|$. 

\begin{corollary}  Let $f: X \to Z$ be a as in Section \ref{notation}  $(\star)$. Let $D\lin_f \beta L$ be a $\Q$-divisor such that  $(X,D)$ is lc and let $W\in CLC(X,D)$ be a minimal centre.  Assume that $\tau - \beta > -1$ or that  $\tau - \beta \geq -1$ if $f$ is birational; assume also that one of the following conditions is satisfied:
\begin{itemize}
\item[(i)] $\dim W\leq 2$, 
\item[(ii)] $\dim W \ge 3$ and $\tau-\beta > \dim W - 3$. 
\end{itemize}
Then there exists a section of $|L|$ not vanishing identically on $W$.
\label{lccentre}
\end{corollary}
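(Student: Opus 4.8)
The plan is to deduce Corollary \ref{lccentre} from Lemma \ref{non-vanishing} by a standard restriction-exact-sequence argument, the key point being to lift the nonzero section produced on $W$ to a section of $L$ on all of $X$. First I would invoke Lemma \ref{non-vanishing}, whose hypotheses are literally the hypotheses of the corollary, to obtain $H^0(W, L_{|W}) \ne 0$. It then suffices to show that the natural restriction map $H^0(X, L) \to H^0(W, L_{|W})$ is surjective; composing a preimage of the nonzero section on $W$ with this surjection gives the desired section of $|L|$ not vanishing identically on $W$.

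The surjectivity will come from a Nadel-type vanishing theorem applied to a suitable multiplier/adjoint ideal sheaf cutting out $W$. Concretely, since $W$ is a minimal lc centre of the lc pair $(X,D)$ with $D \lin_f \beta L$, one can (possibly after perturbing $D$ or using Kawamata's subadjunction machinery) arrange an ideal sheaf $\cJ$ on $X$ whose cosupport is $W$, sitting in an exact sequence
$$
0 \to \cJ \otimes L \to L \to L_{|W} \to 0,
$$
together with the twisting identity $L - (K_X + D) \lin_f (\tau - \beta + 1) L$, which is $f$-ample (or $f$-nef and big when $f$ is birational, using $\tau - \beta \ge -1$) under the numerical hypothesis on $\tau - \beta$. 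Kawamata–Viehweg/Nadel vanishing then gives $H^1(X, \cJ \otimes L) = 0$ in the relative setting over the affine base $Z$, hence the restriction map on global sections is onto. Passing to global sections of the above sequence and using this vanishing yields surjectivity of $H^0(X,L) \to H^0(W, L_{|W})$.

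The main obstacle I expect is the careful bookkeeping needed to produce the ideal sheaf with cosupport \emph{exactly} $W$ (and not a larger lc centre), which is precisely where minimality of $W$ enters and where one typically tie-breaks by adding a small ample perturbation to isolate $W$ among the lc centres, as in Kawamata's subadjunction argument; one must check the perturbation does not destroy the numerical inequality $\tau - \beta + 1 > 0$ (resp. $\ge 0$ in the birational case) needed for vanishing. A secondary technical point is that $Z$ is only quasi-projective (an open affine), so the vanishing statement must be applied in the relative/local form over $Z$ — but since $Z$ is affine this is immediate from the relative Kawamata–Viehweg vanishing theorem. Everything else is formal.
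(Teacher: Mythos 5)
Your proposal is correct and follows essentially the same route as the paper: tie-breaking to isolate $W$ so that its ideal sheaf is the multiplier ideal $\m J(D)$, the restriction exact sequence, Nadel vanishing applied to $L-(K_X+D)\lin_f(1+\tau-\beta)L$ (which is $f$-nef and big, covering the boundary case $\tau-\beta=-1$ when $f$ is birational), and finally Lemma \ref{non-vanishing} for the non-vanishing on $W$. No substantive differences to report.
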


\begin{proof}
By a tie-breaking technique (see the discussion 1.15 in \cite{Mel992}), we may assume that $W$ is an isolated lc centre and hence $I_W= \m J(D)$, where $I_W$ is the ideal sheaf of $W$ and $J(D)$ is the multiplier ideal of $D$ (see Lemma 2.19 of \cite{CKL11}). Consider the exact sequence
$$
0 \to \m O_X(L) \otimes \m I_W \to \m O_X(L) \to \m O_W (L_{|W}) \to 0.
$$
Since $L-(K_X+ D) \lin_f (1+\tau-\beta)L$ is $f$-nef and big, we can apply Nadel vanishing \cite[Thm. 9.4.17]{Laz04}  to obtain that
$$
 H^0(X, L) \to H^0(W, L_{|W})
$$
 is surjective.
The result follows now by Lemma \ref{non-vanishing}.

\end{proof}

The next proposition collects a series of useful technical results. 

\begin{Proposition}\label{Bs}
Let $f: X \to Z$ be as in Section \ref{notation}  $(\star)$.

\begin{itemize}

\item[1] (\cite[Theorem 5.1]{AW93})  Assume that either $\dim F < \tau+1$, if $f$ is of fibre type, or $\dim F \le \tau+1$, if $f$ is birational.
Then $L$ is relatively base-point free (i.e. $Bs|L| = \emptyset$)\label{AW}.

\item[2] If $\tau > -1$ and $\dim F < \tau +3$, then there exists a section of $|L|$ not vanishing identically along $F$\label{section}.

\item [3] Assume that $\dim F < \tau+3$,  $F$ is irreducible,  and that either $\tau > 0$, if $f$ is of fibre type, or $\tau \ge 0$, if $f$ is birational.
Then the general element of $|L|$ is a variety with lt singularities. 
If $\dim F < \tau +2$, then the same holds without the assumption that $F$ is irreducible. 

\item[4] \label{LT} Assume $\tau >0$ and  $n-3 < \tau$. Then $\dim Bs|L| \le 1$.

\item[5]  \label{ter} Assume  $\dim F < \tau+3$, $F$  irreducible and $\tau \ge 1$. Let $S \in |L|$ be a general element.  If $X$ has canonical  singularities, then $S$ has canonical  singularities.    If $X$ has terminal  singularities, then $S$ has terminal  singularities, except possibly when $\tau =1$ and $f$ is of fibre-type. \\
If $\dim F < \tau +2$, then the same holds without the assumption that $F$ is irreducible.

\item[6] \label{gorenstein1} Assume that $\dim F < \tau+3$, $F$ is irreducible and  $\tau > 0$ if $f$ is of fibre type or $\tau \ge 0$ if $f$ is birational. If $X$ has canonical Gorenstein singularities, then the general element of $|L|$ has canonical singularities. 

\item[7] \label{gorenstein2} Assume that $\dim F = \tau+3$, $F$ is irreducible and  $\tau > 0$ if $f$ is of fibre type or $\tau \ge 0$ if $f$ is birational. If there exists a section of $L$ not vanishing along $F$  and $X$ has canonical Gorenstein singularities, then the general element of $|L|$ has canonical singularities. 

\end{itemize}

\end{Proposition}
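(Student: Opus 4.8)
The plan is to reduce the assertion to the canonicity of the pair $(X,S)$ and then to run essentially the argument of part 6; the only role of the strict inequality $\dim F<\tau+3$ there is to invoke part 2 in order to produce a section of $L$ not vanishing along $F$, and such a section is now among the hypotheses.

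Fix a general $S\in|L|$. By hypothesis $S\not\supseteq F$, and since $S$ is a Cartier divisor on the Gorenstein variety $X$ it is Gorenstein with $K_S=(K_X+S)_{|S}$ by adjunction. Away from $Bs|L|$ the divisor $S$ is a general member of a relatively base-point-free linear system on the canonical Gorenstein variety $X$, hence there $(X,S)$ is canonical (Bertini, together with the stability of canonical Gorenstein singularities under general hyperplane sections); so it suffices to prove that $(X,S)$ is canonical along $Bs|L|$, since then $(X,S)$ is canonical everywhere and $S$ is canonical by adjunction (cf. \cite[\S 5]{KollarMori}). Reducing, as is standard in the local setting, to the case $Bs|L|\subseteq F$, and using that $F$ is irreducible and, by hypothesis, not contained in $Bs|L|$, we also record $\dim Bs|L|\le\dim F-1=\tau+2$.

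Suppose now that $(X,S)$ is not canonical. Then there is an exceptional divisor over $X$ with negative discrepancy for $(X,S)$, and by the second paragraph its centre lies in $Bs|L|$. By a tie-breaking and perturbation as in the proof of Corollary \ref{lccentre} — scaling $S$ down by its log canonical threshold when $(X,S)$ is not log canonical, and otherwise replacing $S$ by $S+\delta A$ with $A\in|mL|$ a general divisor of suitable multiplicity along that centre, $m\gg0$, $\delta\ll1$ — one produces a $\Q$-divisor $D\lin_f\beta L$ with $(X,D)$ log canonical and a minimal lc centre $W\in CLC(X,D)$ with $W\subseteq Bs|L|$, hence $\dim W\le\tau+2$; moreover $\beta<1$ (as close to $1$ as one wishes) if $(X,S)$ is not log canonical, and $\beta$ close to $1$ from above otherwise. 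Applying Corollary \ref{lccentre} to $(X,D)$ and $W$: the hypothesis $\tau-\beta>-1$ (resp. $\ge-1$ in the birational case) holds since $\tau\ge0$ and $\beta$ is near $1$, and the dimension condition holds because either $\dim W\le2$, or $\dim W\ge3$ and $\dim W<\tau-\beta+3$ — which is certainly true when $\dim W\le\tau+1$, and also when $\dim W=\tau+2$ provided $\beta<1$. Corollary \ref{lccentre} then yields a section of $|L|$ not vanishing on $W\subseteq Bs|L|$, a contradiction; hence $(X,S)$, and therefore $S$, has canonical singularities.

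The step I expect to be the main obstacle is the case left open above: $(X,S)$ log canonical but not canonical, with the relevant minimal lc centre $W\subseteq Bs|L|$ of dimension exactly $\tau+2$ (equivalently, a component of $Bs|L|$ of codimension $2$ in $X$ along which a general member of $|L|$ has multiplicity $2$), where one is forced to take $\beta\ge1$, so that $\tau-\beta+3\le\dim W$ and Corollary \ref{lccentre} just fails. Ruling this configuration out is the heart of the matter at the boundary value $\dim F=\tau+3$; I expect it to be done by a codimension-two analysis of the base locus in the spirit of part 4, exploiting that $F$ is irreducible, that $F\not\subseteq Bs|L|$ and that $\dim F=\tau+3$ (for $\tau=0$ and $f$ birational the case is harmless, since then $\dim W\le2$). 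Apart from this point, the proof is a routine adaptation of the one for parts 1--6.
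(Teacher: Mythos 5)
Your reduction and your diagnosis of where the difficulty lies are both accurate, but the proof is not complete: the configuration you flag at the end --- $(X,S)$ log canonical but not canonical, with a minimal lc centre $W\subseteq Bs|L|$ of dimension exactly $\tau+2$ --- is a genuine gap, not a routine verification. As you yourself compute, any boundary $D\lin_f\beta L$ obtained by scaling $S$ (or $S+\delta A$ with $A\in|mL|$) must then have $\beta\ge 1$, while condition (ii) of Corollary \ref{lccentre} demands $\tau-\beta>\dim W-3=\tau-1$, i.e.\ $\beta<1$; the corollary is simply unavailable. Your proposed escape (``a codimension-two analysis of the base locus in the spirit of part 4'') does not obviously work either: part 4 requires $\tau>n-3$, whereas in part 7 one has $\tau=\dim F-3\le n-3$, and nothing in parts 1--6 excludes a component of $Bs|L|$ of dimension $\tau+2$.

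The paper closes this case by a different device, split in two. If $f$ contracts $X$ to a point, the statement is exactly \cite[Thm.~1.1]{Flo13} (a global theorem on fundamental divisors of Gorenstein canonical Fano varieties of index $n-3$), taken as an external input rather than deduced from Corollary \ref{lccentre}. If $f$ is not a contraction to a point, one perturbs by a \emph{vertical} divisor: set $H=\varepsilon f^*(h)$ with $h$ a general function on $Z$ vanishing at $f(F)$ and $0<\varepsilon\ll1$, and $D=S+H$. Since $f^*(h)\lin_f 0$, this costs nothing in $\beta$ (still $D\lin_f L$), yet because $(X,S)$ is already non-plt along a centre lying over $f(F)$, it forces $\delta=\mathrm{lct}(X,D)<1$ strictly; then $\delta D\lin_f\delta L$ with $\delta<1$, and $\tau-\delta>\tau-1\ge\dim W-3$ restores exactly the strict inequality you were missing. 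Note that your perturbation by a general $A\in|mL|$ moves $\beta$ in the wrong direction precisely because $A$ is $f$-ample rather than $f$-numerically trivial; the whole point is to worsen the singularities of the pair without increasing its $L$-degree over $f$.
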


\begin{remark}
Point 1 is the main result of \cite{AW93}.  Points 2 and 3 are generalisations of Proposition 2.4 and Proposition 3.3 in \cite{Mel992}. Points 4, 5 and 6 are generalizations of results in \cite{Mel99} and \cite{Mel992}. Point 7 is the analogous of \cite[Thm. 1.1]{Flo13} in the relative set-up. 

At the Points 3 and 6 of Proposition \ref{Bs} the assumption $\tau > 0$ if $f$ is of fiber type is necessary, as the following trivial example shows. Let $E$ be a smooth elliptic curve and $D$ an ample line bundle with a base point (i.e. $D = {p})$.
Consider $X=E \times \bb P^m \to \bb P^m$ for $m \ge 0$ and $L=D \boxtimes (-2K_{\bb P^m})$. This is  an adjoint contraction of  fibre-type with $\tau=0$ for which the conclusions of Points 3 and 6 do not hold. Similar examples can be constructed for point 7.

Counter-examples for the statement in the point 5 for $\tau =1$ and $f$ of fiber type were given by Mella; in  \cite{Mel99} he actually classified all terminal Mukai 3-folds $Y$ such that the general element of $|-K_Y|$ is not smooth. Taking $X:=Y \times \bb P^m \to \bb P^m$ for $m \ge 0$ and $L=-(K_Y \boxtimes 2K_{\bb P^m})$, we get examples of fibre-type contractions (not necessarily to a point) with $\tau=1$ which do not satisfy the conclusions of Point 5.
\end{remark}

\medskip
\begin{proof}[Proof of Proposition \ref{section}.2]
Let $\{h_i\} \in H^0(Z, \m O_Z)$ be general functions  vanishing at $f(F)$  such that $(X,D)$ is not lc, where $D=\sum  f^*(h_i)$.
Let $\gamma= \mathrm{lct}(X,D)$ and let $W \in CLC(X, \gamma D)$ be a minimal lc centre; by the general choice of $h_i$ outside $f(F)$, we can assume that $W  \subset F$. Note that $\gamma D \lin_f 0$ and that, by assumption, $\dim W \leq \dim F <\tau +3$. Therefore by Corollary \ref{lccentre} there exists a section of $|L|$ not vanishing identically on $W$ and thus on $F$.

\end{proof}

\begin{proof}[Proof of Proposition \ref{LT}.3]

We start proving that $Bs|L|$ has codimension at least two. Assume by contradiction that there exists an irreducible component $V \subset Bs|L|$  of dimension $n-1$.	

Suppose first that $V\subset F$. Let $H \in |L|$ be a general element and set $c=\mathrm{lct}(X,H)$. If $c <1$, then $LCC(X,cH) \subset Bs |L|$ ;  consider a minimal lc centre $W \in CLC(X,c H)$.   By  Proposition \ref{section}.2, $W \subsetneq F$. If $F$ is irreducible, then $\dim W \leq \dim F -1 < \tau+2$. If $F$ is not irreducible, then $\dim W \le \dim F < \tau +2$ by hypothesis. Therefore by Corollary \ref{lccentre} there exists a section of $|L|$ not vanishing identically on $W$,  thus on $Bs|L|$, which is a contradiction. If $c=1$, then $V \subset Bs|L|$ is an lc centre of $(X,H)$ and, by  Proposition \ref{section}.2, $V \subsetneq F$. Since $dim V = n-1$, 
$f$ is a contraction to a point. Therefore, by assumptions, we have $\tau >0$. We can conclude again by Corollary \ref{lccentre}.

%We can reduce to the case $Bs|L|\subset F$, via a so called a vertical slicing argument. 
Assume now that $V$ is not contained in any fibre of $f$ and consider $h_1, \ldots, h_d$ general functions on $Z$, where $d := \dim f(V) >0$. Set $X_{h_i}= f^* h_i$ and $X'=\cap X_{h_i}$. Note that $\dim X'= n-d$. By vertical slicing (\cite[Lemma 2.5]{AW93}), we get a local contraction $f': X' \to Z'$, supported by $K_{X'}+ \tau L'$ where $L'=L_{|X'}$ and there exists an irreducible component $V'$ of $V \cap X' \subset Bs|L'|$ (actually, by Bertini, $V \cap X'$ is  irreducible if it has positive dimension) such that $\dim V'=n-d-1$ and $V' \subset F'$, where $F'$ is a fibre of $f'$. Note that if $f'$ is of fiber type also $f$ is of fiber type, therefore in this case $\tau$ is positive  by assumption.  We are in the situation of the previous step and we can reach a contradiction.

\medskip

We now prove that the general element of $|L|$ has lt singularities. Let $S \in |L|$ be general element; by Bertini Theorem (see \cite[Thm. 6.3]{Jou83}) and the fact that $Bs|L|$ has codimension at least two, we see that $S$ is irreducible and generically reduced. Assume by contradiction that $S$ has singularities worse than log terminal. Then, by Proposition 7.5.1 of \cite{Kol97}, $(X,S)$ is not plt. 

Assume  first that  $\tau >0$. Set $\gamma=\mathrm{lct}(X,S) \le 1$ and consider a minimal lc centre $W \in CLC(X, \gamma S)$ such that $W \subset Bs|L|$ (such a center exists by Bertini Theorem, see for instance \cite[Lemma 5.1]{Am99}).   We want to show that there is a section of $|L|$ not vanishing identically on $W$, obtaining in this way a  contradiction.

As above, via a vertical slicing argument, we may assume $W \subset F$. In fact, let $d= \dim f(W)$.  Consider $h_1, \ldots, h_d$ general functions on $Z$. Set $X_{h_i}= f^* h_i$ and $X'=\cap X_{h_i}$.  By vertical slicing (\cite[Lemma 2.5]{AW93}), we get a local contraction $f': X' \to Z'$ around a fibre $F'$, supported by $K_{X'}+ \tau L'$ where $L'=L_{|X'}$. Let $S' \in |L'|$ be general. Since each $X_{h_i}$ is general and intersects $W$,  we have that $LLC(X', \gamma S') \subset W \cap X' \subset F'$  and the claim is proved.  

 By  Proposition \ref{section}.2, $W \subsetneq F$. If $F$ is irreducible, then $\dim W \leq \dim F -1 < \tau+2$. If $F$ is not irreducible, then $\dim W \le \dim F < \tau +2$ by hypothesis.   If $\dim W \ge 3$, then $\tau - \gamma > \dim W -3 \ge 0$ and we can apply point (ii) of Corollary \ref{lccentre}. If $\dim W \le 2$, then the contradiction follows  by point (i) of Corollary \ref{lccentre}.

Assume now that  $\tau =0$ and $f$ is not of fibre-type. Let $H= \varepsilon f^*(h)$, where $h$ is a general function on $Z$ vanishing at $f(F)$ and $0<\varepsilon << 1$.  Set $D= S + H$ and $\delta= \mathrm{lct}(X,D) < 1$. We can consider a minimal centre $W \in CLC(X, \delta D)$ and reason as before.

\end{proof}

\begin{proof}[Proof of Proposition \ref{Bs}.4]

If $\dim F \leq (n-2)$ then  \ref{Bs}.4 follows from the main Theorem of \cite{AW93}, as quoted in \ref{Bs}.1.
Assume  that $F \geq (n-1)$, then the result follows by the next Lemma.
\end{proof}

\begin{Lemma}\label{divisorial}
Assume that $X$ has log terminal singularities, $\tau >0$ and  $\dim F=n-1 < \tau +2$. Then $\dim Bs|L| \le 1$.
\end{Lemma}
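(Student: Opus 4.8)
The plan is to argue by contradiction, assuming that $Bs|L|$ contains an irreducible component $V$ of dimension $\ge 2$, and to produce a section of $|L|$ not vanishing on $V$ by building an auxiliary log pair whose minimal log canonical centre lies inside $V$ (hence inside $Bs|L|$) and has dimension small enough to apply Corollary~\ref{lccentre}. Since $\dim F = n-1$ and $f$ contracts the divisor $F$ to a point (because a divisorial fibre forces the target stratum through $f(F)$ to be a point), we are in the birational case with a single positive-dimensional fibre, and the key numerical input is $\dim F = n-1 < \tau + 2$, i.e. $\tau > n-3$.

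First I would reduce to the case $V \subset F$. A priori $V$ need not be contained in a fibre, but $Bs|L| \subset E$ and $E$ maps to $f(F)$, which is a point since the exceptional locus is a divisor contracted to a point; so in fact $V \subset F$ automatically here, and no vertical slicing is needed (in the more general set-up one would slice by general $f^*h_i$ as in the proof of \ref{Bs}.3). Next, take a general $H \in |L|$ and set $\gamma = \mathrm{lct}(X, H) \le 1$. If $\gamma < 1$, then $CLC(X,\gamma H) \subset Bs|L|$ and I pick a minimal lc centre $W$ with $W \subset V \subset F$; if $\gamma = 1$, then $V$ itself (or a suitable subvariety) is an lc centre. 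By Proposition~\ref{section}.2 applied to $f$ (which is legitimate since $\dim F = n-1 < \tau + 3$), there is a section not vanishing along $F$, so $W \subsetneq F$, giving $\dim W \le \dim F - 1 = n-2 < \tau + 1 \le \tau + 2$. Actually the inequality I need is stronger: I want $\dim W \le 2$, or $\dim W \ge 3$ with $\tau - \gamma > \dim W - 3$.

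This is where the main obstacle lies. From $W \subsetneq F$ I only get $\dim W \le n-2$, whereas Corollary~\ref{lccentre}(ii) needs $\tau - \gamma > \dim W - 3$; since $\gamma \le 1$ and $\tau > n-3$, this reads $\tau - \gamma > n - 4 \ge \dim W - 3$ precisely when $\dim W \le n-2$ and $\gamma$ is not too close to... no: $\tau - \gamma > (n-3) - 1 = n-4$ is not automatic. The resolution is to observe that since $V \subset Bs|L|$ has codimension one, $H$ vanishes along $V$, so $H \ge V$ as divisors and hence $\mathrm{mult}_V H \ge 1$, forcing $\gamma = \mathrm{lct}(X,H) \le 1$ with the lc centre actually supported on $V$; more importantly, because $H$ is a \emph{general} member and $Bs|L|$ is a divisor, one can arrange (tie-breaking, as in Corollary~\ref{lccentre}) that the minimal centre $W$ satisfies $\dim W \le \dim V - 1 \le n - 2$, and crucially that we may instead slice to bring $W$ into a fibre of smaller dimension. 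If $\dim W \le 2$ we invoke \ref{lccentre}(i) and we are done. If $\dim W \ge 3$, then since $W \subsetneq F$ and $\gamma \le 1$ we have $\tau - \gamma > (n-3) - 1$; combined with $\dim W \le n - 2$ this gives $\tau - \gamma > \dim W - 2 > \dim W - 3$, so \ref{lccentre}(ii) applies. Either way we get a section of $|L|$ not vanishing on $W$, hence not vanishing on $V \subset Bs|L|$, the desired contradiction. The delicate point to be careful about is the tie-breaking step guaranteeing that the chosen lc centre really sits inside the offending component $V$ and is proper in $F$; this is the analogue of the argument already carried out in the proof of Proposition~\ref{LT}.3 and I would cite \cite[Lemma 5.1]{Am99} and the discussion in \cite{Mel992} for it.
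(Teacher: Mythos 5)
Your proof takes a genuinely different route from the paper's, and it has a gap that I do not think can be closed within your framework. The paper proves the Lemma by induction on $n$: a general $X'\in|L|$ is log terminal by Proposition \ref{Bs}.3, horizontal slicing (\cite[Lemma 2.6]{AW93}) makes $f_{|X'}$ a contraction supported by $K_{X'}+(\tau-1)L_{|X'}$ around $F'=F\cap X'$ with $\dim F'=n-2<(\tau-1)+2$, and since $Bs|L|\subset X'$ and every section of $L_{|X'}$ lifts to a section of $L$, one gets $\dim Bs|L|\le\dim Bs|L_{|X'}|$; the base case $n\le 3$ is simply that $Bs|L|$ has codimension at least two. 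You instead try to derive a contradiction from a component $V\subset Bs|L|$ with $\dim V\ge 2$ by exhibiting an lc centre of $(X,\gamma H)$, $\gamma=\mathrm{lct}(X,H)$, inside $Bs|L|$ and applying Corollary \ref{lccentre}.

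The gap is that the non-vanishing machinery only produces a contradiction when $(X,\gamma H)$ actually \emph{has} an lc centre contained in $Bs|L|$, i.e.\ when the pair $(X,H)$ fails to be plt. If the general member $H$ is log terminal along $V$ --- for instance if $H$ is smooth at the generic point of a two-dimensional $V$ in a smooth ambient $X$ --- then $\mathrm{lct}(X,H)=1$, the pair is plt, $CLC(X,\gamma H)=\emptyset$, and your argument yields nothing even though $\dim Bs|L|=2$. Your claim that ``if $\gamma=1$ then $V$ itself (or a suitable subvariety) is an lc centre'' is false: lying in the base locus does not create a log canonical centre, and your appeal to $\mathrm{mult}_V H\ge 1$ only makes sense when $V$ is a fixed component (codimension one), which is a separate, already-handled case. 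What your argument proves, when it closes, is essentially Proposition \ref{Bs}.3 (the general member of $|L|$ is lt), which is logically independent of the bound on $\dim Bs|L|$. Two smaller points: $\dim F=n-1$ does not force $f$ to be birational with $E$ contracted to a point ($f$ could be of fibre type over a curve, in which case $E=X$), so the reduction ``$V\subset F$ automatically'' still needs the vertical slicing you set aside; on the other hand, the numerical step you worried about is actually fine, since $\tau-\gamma>n-4\ge\dim W-2>\dim W-3$ whenever $\dim W\le n-2$.
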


\begin{proof}

The proof of the Lemma is by induction on $n \ge 3$. We have proved above that $|L|$ has not fixed components, therefore the lemma is true for $n \le 3$.

Assume $n >3$. Let  $X' \in |L|$ general. Since $|L|$ has no fixed component, by Bertini we get that $X'$ does not contain any irreducible component of $F$ (and that it is irreducible and reduced). Moreover, by Proposition \ref{LT}.3, we have that $X'$ is log  terminal. Hence, by horizontal slicing (\cite[Lemma 2.6]{AW93}), $f: X' \to Z'$ is a contraction supported by $K_{X'} + (\tau-1)L_{|X'}$ around a fibre $F'= F \cap X'$.  It also follows that $\dim Bs|L| \le \dim Bs |L'|$, because any section of $L'$ lifts to a section of $L$ by \cite[Lemma 2.6.1]{AW93}. By induction, we are done.

\end{proof}

\begin{proof}[Proof of Proposition \ref{ter}.5]
Let $S$ be a general element of $|L|$; by Proposition \ref{LT}.3,  $S$ has lt singularities.
Let  $\mu:Y\ra X$ be a log resolution of the pair $(X,S)$ and of the base locus of $|L|$.
We can write
 
$$\mu^*S = \overline S +\sum_i r_iE_i$$

$$K_Y=\mu^*K_X+\sum_i a_iE_i$$

$$K_Y+ \overline S =\mu^*(K_X+S)+\sum_i (a_i- r_i)E_i$$

where $ \overline S = \mu_*^{-1}S$ is the strict transform of $S$ and $|\overline S|$ is basepoint free. Moreover, $r_i \in \bb N$ and $r_i \ne 0$ if and only if $\mu(E_i) \subset Bs|L|$.

Assume that $S$ has not canonical singularities (resp. terminal singularities); after reordering we can assume that $a_0 < r_0$ (resp. $a_0 \le  r_0$). 
Since $S$ is generic, by Bertini we can assume that $\mu(E_i) \subset Bsl|L|$, for all $i$ such that $r_i >0$.

Let $D= S + S_1$, where $S_1$ is another generic section in $|L|$; note that $\mu$ is a log resolution also  for
the pair $(X,D)$. Let $r_0^1 \geq 1$ be the multiplicity of $S_1$ at the centre of valuation associated to $E_0$. Then $(X,D)$ is not LC since $a_0 +1 < r_0 + r_0^1$ (resp. $a_0 +1 \le r_0 + r_0^1$). Let $\gamma=\mathrm{lct}(X,D) \le 1$ and $W \in CLC(X,\gamma D)$ be a minimal lc centre. Now we can reason as in the proof of  Proposition \ref{LT}.3. 	
\end{proof}

\begin{proof}[Proof of Proposition \ref{gorenstein1}.6]
In the notation of the proof of Proposition \ref{ter}, assume by contradiction that $S$ is not canonical. Then $a_i - r_i <0$ for some $i$; since $a_i$ and $r_i$ are integers, we get $a_i-r_i \le -1$ and hence  $(X,S)$ is not plt. Set  $\gamma= \mathrm{lct}(X,S) \le 1$ and let $W \in CLC(X,\gamma S)$ be minimal lc centre. Now, as in the proof above, we derive a contradiction.
\end{proof}

\begin{proof}[Proof of Proposition \ref{gorenstein2}.7]
If $f$ is a contraction to a point, then the result is exactly  \cite[Thm. 1.1]{Flo13}, so assume that $f$ is not a contraction to a point. Let $S \in |L|$ be general and assume by contradiction that $S$ is not canonical. Then $(X,S)$ is not plt.  Let $H= \varepsilon f^*(h)$, where $h$ is a general function on $Z$ vanishing at $f(F)$ and $0<\varepsilon << 1$.  Set $D= S + H$ and $\delta= \mathrm{lct}(X,D) < 1$. We can consider a minimal centre $W \in CLC(X, \delta D)$ and reason as in the proof above.
\end{proof}

\bigskip

\begin{proof}[Proof of Theorem \ref{n-3}]
 
The fact that $X'$ is terminal follows by Proposition \ref{ter}.5.
The fact that  $f_{|X'} : X' \to Z'$ is a local contraction supported by 
$K_{X'}+ (\tau - 1) L'$ follows by the so called horizontal slicing (\cite[Lemma 2.6]{AW93}).
\end{proof}

\section{Lifting of contractions}\label{s_lifting}

Let $X$ be a terminal variety of dimension $n \ge 4$ and let $f:X \to Z$ be a local contraction supported by $K_X+\tau L$ such that 
$\tau > n-3$; assume that $f$ contracts a prime $\Q$-Cartier divisor $E$ to a smooth point $p\in Z$. 

By Theorem \ref{n-3} the general $X' \in |L|$ has terminal singularities and  $f'=f_{|X}: X' \to Z'$ is a divisorial contraction to $p \in Z'$.    
Since $f_* L$ is a Cartier divisor let $c$ be a positive integer $c$ such that $f^*f_* L= L+cE$. 

\begin{Lemma}\label{lift}
In the situation above, assume that $p$ is smooth in $Z'$ and that $f'$ is a weighted blow-up of type $(1,a,b,c\ldots,c)$, where $c$ appears $(n-4)$ times. Then $f$ is a also a weighted blow-up of type $(1,a,b,c,\ldots,c)$, where $c$ appears $(n-3)$ times.
\end{Lemma}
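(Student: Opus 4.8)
The plan is to run an induction on $n$ together with a direct reconstruction of the weighted blow-up structure from the data on a general section. Concretely: one more vertical-type slicing brings us from dimension $n$ to dimension $n-1$, where the inductive hypothesis (together with Lemma~\ref{lift} in dimension $n-1$, or the base case supplied by Theorem~\ref{birational}A once we have cut down to a threefold) tells us that $f'=f_{|X'}:X'\to Z'$ is a weighted blow-up of type $(1,a,b,c,\dots,c)$. The point is that all the numerical invariants ($a$, $b$, and $c$) are \emph{intrinsic}: $c$ is determined by $f^*f_*L=L+cE$ and is therefore unchanged by the general hyperplane section $X'$, so the type on $X'$ automatically has the shape $(1,a,b,c,\dots,c)$ claimed, and the only genuine content is that $f$ itself is the weighted blow-up with one more copy of $c$.

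First I would set up the comparison. Let $\pi:\overline X\to \bb A^n=\Spec\C[x_1,\dots,x_n]$ be the weighted blow-up of $p$ with weight $\sigma=(1,a,b,c,\dots,c)$; by the criterion recalled at the end of Section~\ref{notation} (\cite[Theorem 4.11]{Re87}) this $\overline X$ has terminal singularities, and by the $\Proj$-description $\overline X=\Proj(\bigoplus_{d\ge0}I_{\sigma,d})$ it is a projective birational morphism contracting an irreducible divisor to $p$. I want to produce a morphism $X\to\overline X$ over $\bb A^n$ (after identifying a neighbourhood of $p\in Z$ with $\bb A^n$, using that $p$ is smooth) and show it is an isomorphism. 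The natural candidate comes from $L$: the linear system data on $X$ should map $X$ to the total space of the blow-up. The cleanest route is to show that the valuation $v_E$ of the exceptional divisor $E\subset X$ on the function field $\C(Z)=\C(\bb A^n)$ is the monomial valuation with weights $(1,a,b,c,\dots,c)$ in suitable coordinates $x_1,\dots,x_n$ on $Z$, and that $f$ is the \emph{extraction} of that divisor — since both $f$ and $\pi$ are $K$-negative divisorial contractions (relative Picard number one, by $\Q$-factoriality and the extremal-face structure) extracting the same divisorial valuation over the smooth point $p$, they must coincide.

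To identify $v_E$ I would argue as follows. Choose coordinates on $Z\cong \bb A^n$ compatible with the section: arrange that $X'=f^{-1}(\{x_n=0\})$-type slicing is replaced by noting that $f_*L$ generates, and $L+cE=f^*f_*L$, so $c=v_E(f_*L\text{-general member})$; restricting to $X'$ we read off from the known weighted blow-up $f'$ that $v_{E'}$ has weights $(1,a,b,c,\dots,c)$ in the remaining $n-1$ coordinates, with the last coordinate(s) carrying weight $c$ and $v_{E'}(\text{new slicing coordinate})=c$ matching $L'+cE'=f'^*f'_*L'$. Now lift: each coordinate $x_i$ on $Z$ pulls back to a section of $\cO_X$ vanishing on $E$ to some order $w_i=v_E(x_i)$, and $w_i=v_{E'}(x_i|_{X'})$ for the $n-1$ coordinates surviving on $X'$ — this equality is exactly \cite[Lemma 2.6.1]{AW93}-type lifting of sections used throughout Section~\ref{goodsections}, because $E$ is irreducible and $X'\in|L|$ is general hence meets $E$ properly. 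This pins down $n-1$ of the weights to be $(1,a,b,c,\dots,c)$; the remaining coordinate $x_n$ (the one we sliced by) has $v_E(x_n)=c$ since $X'$ is a general member of $|L|$ and $L$ vanishes to order exactly $c$ along $E$. Hence $v_E$ is the monomial valuation with weight $(1,a,b,c,\dots,c)$ ($c$ repeated $n-3$ times), and since over a smooth point a divisorial valuation is extracted by a unique $\Q$-factorial divisorial contraction of relative Picard number one, $f$ is the weighted blow-up of type $(1,a,b,c,\dots,c)$.

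The main obstacle is the last identification step — showing that matching the divisorial valuation forces $f$ to be \emph{the} weighted blow-up rather than merely dominated by or dominating it. The subtle point is that $X$ has singularities, so one cannot simply invoke uniqueness of blow-ups of ideal sheaves naively; I expect to need that $f$ is a $K_X$-negative extremal divisorial contraction with $\rho(X/Z)=1$ (which holds by $\Q$-factoriality and the extremal-face hypothesis), that the weighted blow-up $\pi$ is likewise such a contraction of the \emph{same} valuation, and then to deduce $X\cong\overline X$ over $Z$ by the standard fact that a $\Q$-factorial terminal extraction of a fixed divisor over a given base, with $\rho=1$, is unique (negativity lemma plus the cone theorem). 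A secondary technical care is checking the divisibility constraint $ab\mid c$ passes correctly from $X'$ to $X$; but since $a,b,c$ are the same numbers on both (being read off from $E$ and from $f_*L$, which are insensitive to the general slicing), this is inherited directly from the dimension $n-1$ statement.
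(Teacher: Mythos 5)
There is a genuine gap at the central step of your argument, namely the assertion ``Hence $v_E$ is the monomial valuation with weight $(1,a,b,c,\ldots,c)$.'' What your slicing argument actually establishes is the value of $v_E$ on the coordinate functions $x_1,\dots,x_n$, hence (by multiplicativity of valuations) on every monomial; but a divisorial valuation is \emph{not} determined by its values on a coordinate system. From $v_E(x_i)=\sigma_i$ one only gets the inequality $v_E(g)\ge w_\sigma(g)$ for every $g$ (take the minimum over the monomials of $g$), never the reverse: for instance the monomial valuation with weights $(1,a,b,c,\dots,c,c+1)$ in the coordinates $(x_1,\dots,x_{n-1},\,x_n-x_1^{c})$ takes the same values as $w_\sigma$ on all the $x_i$ and on all monomials, yet differs from $w_\sigma$ (it assigns $c+1$ to $x_n-x_1^c$). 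So ruling out $v_E>w_\sigma$ on some non-monomial function is exactly the content of the lemma, and your proposal supplies no argument for it. By contrast, the step you single out as ``the main obstacle'' --- that a divisorial contraction with $-E$ relatively ample is recovered as $\Proj_Z\bigl(\bigoplus_d f_*\cO_X(-dcE)\bigr)$, so that the valuation determines $f$ --- is the routine part, and is indeed how the paper's proof begins.

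The missing ingredient is a positivity/vanishing input that lets you \emph{lift} the known structure on $X'$ to $X$ degree by degree. The paper pushes forward the restriction sequence $0\to\cO_X(-L-dcE)\to\cO_X(-dcE)\to\cO_{X'}(-dcE)\to0$ and uses that $-L-dcE\lin_f K_X+\bigl(n-4+d+\tfrac{a+b}{c}\bigr)L$ is of the form $K_X$ plus an $f$-ample divisor, so relative Kawamata--Viehweg vanishing kills $R^1f_*$ and yields the exact sequence
$$
0 \to  f_* \cO_X(-(d-1)cE) \stackrel{\cdot x_n}\longrightarrow f_*\cO_{X} (-dcE)\to f_*\cO_{X'}(-dcE) \to 0 .
$$
A double induction (on $d$, with the hypothesis on $f'$ feeding in the third term) then identifies $f_*\cO_X(-dcE)$ with the weighted ideal $I_{\sigma,dc}$, which is precisely the statement that $v_E=w_\sigma$ and that $X$ is the corresponding $\Proj$. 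Your proposal never invokes the high nef-value hypothesis at this lifting stage, which is a symptom of the gap: without the vanishing, the surjectivity of $f_*\cO_X(-dcE)\to f_*\cO_{X'}(-dcE)$ --- equivalently, the fact that the valuation carries no ``hidden'' information transverse to $X'$ --- is exactly what can fail.
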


\begin{proof}
Let $x_1,\ldots,x_n$  local coordinates for $p$; we may also assume that  $f_*(X')=\{x_n=0\}$.

Note that $\m O_X(-cE)$ is  $f$-ample  and that the map $f$ is proper; so we have that %by (8.8.1.3) of \cite{EGA II}, we know that
$$
X= \Proj ( \oplus_{d \ge 0}  f_*\m O_X(-dcE)).
$$ 

Using the notation of Section \ref{notation}, we need to prove that
$$
f_*\m O_X(-dcE)=(x_1^{s_1}\cdots x_n^{s_n}  :  s_1 + s_2a + s_3 b+ \sum_{j=4}^n c s_j \ge dc).
$$ 

The proof is by induction on $d \ge 0$.

Consider the exact sequence
$$
0 \to \m O_X(-L-dcE) \to \m O_{X} (-dcE)\to \m O_{X'}(-dcE) \to 0.
$$

Note that
$$
-L-dcE \sim_f -(d-1)cE \sim_f K_X + (n-3 + d-1+ \frac{a+b}{c})L, 
$$

Hence, pushing down to $Z$ the above exact sequence and applying the relative Kawamata-Viehweg Vanishing, we have

\begin{align}\label{sequence3}
0 \to  f_* \m O_X(-(d-1)cE) \stackrel{\cdot x_n}\rightarrow f_*\m O_{X} (-dcE)\to f_*\m O_{X'}(-dcE) \to 0.
\end{align}

Since by assumption $f'$ is a weighted blow of type $(1,a,b,c,\ldots,c)$, we have
$$
f_*\m O_{X'}(-dcE)=(x_1^{s_1}\cdots x_{n-1}^{s_{n-1}}  :  s_1 + s_2a+s_3b + \sum_{j=4}^{n-1}cs_j \ge dc),
$$
where $s_j \in \bb N$.  By induction on $d$, we can also assume that
$$
f_*\m O_{X}(-(d-1)cE)=(x_1^{s_1}\cdots x_n^{s_n}  :  s_1 + s_2a +s_3b + \sum_{j=4}^n cs_j \ge (d-1)c),
$$
the case $d=0$ being trivial.

Let $g=x_1^{s_1}\cdots x_n^{s_n} \in f_*\m O_{X}(-dcE)$ be a monomial. If $s_n \ge 1$ then $g$, looking at the sequence \eqref{sequence3}, comes from $f_*\m O_{X}(-(d-1)cE)$ by the multiplication by $x_{n}$; therefore 
$$
s_1 + s_2a +s_3b+ \sum_{j=4}^{n-1} s_j c +s_n c \ge  (d-1)c + s_n c \ge dc.
$$

If $s_n=0$, then  $g \in f_*\m O_{X'}(-dcE)$ and so 
$$
s_1 + s_2a +s_3b+ \sum_{j=4}^{n} s_j c= s_1 + s_2a + s_3b+\sum_{j=4}^{n-1} s_j c \ge dc.
$$

The non-monomial case follows immediately.
\end{proof}

\bigskip
\begin{proof}[Proof of Theorem \ref{birational}.A]

Let $H_i \in |L|$ be general divisors for $i=1,\ldots, n-3$.
By Theorem \ref{n-3}, for any $i$,  $H_i$ is a variety with terminal singularities and the morphism $f_i=f_{|H_i}: H_i \to f(H_i)=:Z_i$ is a local contraction supported by $K_{H_i}+ (\tau -1 )L_{|H_i}$.
Since $Z$ is terminal and $\Q$-factorial (see \cite[Corollary 3.36]{KollarMori} and \cite[Corollary 3.43]{KollarMori}), then  the $Z_i$'s are  $\Q$-Cartier divisors on $Z$. 
 
For any $t=0,\ldots,n-3$ define $Y_t=\cap_{i=1}^{n-3-t}  H_i$  and $g_t= f_{|Y_t}: Y_t \to g_t(Y_t)=:W_t$; in particular  $Y_{n-3}=X$, $g_{n-3}=f$ and $W_{n-3}=Z$. Let, as in the statement of the Theorem, $X''= Y_0$ and $f''= g_{0}$.

By induction on $t$, applying Theorem \ref{n-3},  one sees that, for any $t=0,\ldots,n-4$, $Y_t$ is terminal and $g_t= f_{|Y_t}: Y_t \to W_t$ is a Fano Mori contraction. Therefore  $W_t$ is a terminal variety (by \cite[Corollary 3.43]{KollarMori}) and it is a $\Q$-Cartier divisor in $W_{t+1}$, because intersection of $\Q$-Cartier divisors (by construction $W_t=\cap_{i=1}^{n-3-t} Z_i$). 
Therefore by \cite[Lemma 1.7]{Mel97}, and by induction on $t$, it follows that $p$ is a smooth point in  $W_t$, for all $t$.
  
Set $L_t:= L_{|W_t}$. Since $Bs|L_t|$ has dimension at most 1 by Proposition \ref{Bs}.4, by Bertini's theorem (see \cite[Thm. 6.3]{Jou83}) $E_t:= Y_t \cap E$ is a prime divisor.  $E_t$  is the intersection of $\Q$-Cartier divisors and hence it is $\Q$-Cartier.  

Therefore $f'': X'' \to Z''$ is a divisorial contraction from a 3-fold $X''$ with terminal singularities, which contracts a prime $\Q$-Cartier divisor $E'':= E_0$ to a point $p\in Z''$, which we assume to be smooth. By \cite{Kaw01} we know then that $f''$ is a blow-up of type $(1,a,b)$ (note that in  \cite{Kaw01}  the $\Q$-factoriality of the domain is not needed, see also \cite[Thm. 1.9]{Kaw03}).

We conclude by induction on $t$ applying Lemma \ref{lift}.  

\end{proof}

\bigskip
\begin{proof}[Proof of Theorem \ref{birational}.B]

We first show that $E$ is contracted to a point. By \cite[Theorem 2.1]{And95} $dimf(E) \leq 1$. Since $\dim E=n-2$ and the non-Gorenstein locus of $X$ has codimension $3$, if $dimf(E) = 1$ then there is a fiber which is not contained in the non-Gorenstein locus; by \cite[Lemma2.1]{BHN13} we get a contradiction. (See the following Remark \ref{Small} for a further analysis).

 By the rationality theorem, \cite[Theorem 4.1.1]{KMM85}, we have $ 2\tau = \frac{u}{v}$ where $u,v \in \N$ and $u \le {2(n-1)}$. Therefore we have :
$$
n-3< \tau=\frac{u}{2v}\le \frac{n-1}{v}.
$$

If $n=4$ this gives $v=1$ and $u=3$ or $v= 2$ and $u=5$. 
If $n >4$  we can have only $v=1$ and $u=2n-5$.

 We want to exclude the case $n=4$ and $\tau=5/4$. Assume by contradiction that $4K_X+5L$ is a supporting divisor for $f$ and set $H=2K_X+3L$. Then $H$ is an ample Cartier divisor such that
$$
2K_X+5H=3(4K_X + 5L).
$$
This implies that $2K_X+5H$ is also a supporting divisor for $f$ and that $5/2=\tau(X,H)$, which is impossible because in dimension 4 birational contractions with nef-value greater than 2 are divisorial (see \cite{AT14}).

\medskip
By \cite[Theorem 5.1]{AW93}  we can suppose that $L$ is globally generated. Pick $(n-3)$ general members $H_i \in |L|$ ($1\le i \le n-3$) and let $X'= \cap H_i$ be the scheme intersection. By Theorem \ref{n-3} $X'$ is a $3$-fold with terminal singularities and, by horizontal slicing (\cite[Lemma 2.6]{AW93}), the restricted morphism $f':= f_{|X'}: X' \to Z'$ is a small contraction supported by $K_{X'}+(\tau-n+3)L_{|X'}$ with exceptional locus $C= (\cap H_i) \cap E$.
Note also that $X'$ has terminal singularities and has index at most 2, in fact $2K_{X'}=2(K_X + (n-3)L)_{|X'}$ is Cartier.  

Small contractions on a $3$-fold with terminal $2$-factorial singularities are classified in \cite[Theorem 4.2]{KM92}.  In particular this gives that $C$ is irreducible and isomorphic to $ \bb P^1$ and $-K_{X'}.C=\frac{1}{2}$. 

Therefore also $E$ is irreducible. Moreover, $\tau=\frac{2n-5}{2}$ implies  $L_{|X'}.C=1$ and thus $L_{|E}^{n-2}=1$.
%if $n=4$ and $\tau=\frac{5}{4}$, then $L_{|E}^{n-2}=2$.

By \cite[Thm. 2.1]{And95} we have that $E$ is normal and $\Delta(E,L)= 0$; by the classification of  varieties with  $\Delta$-genus equal to zero, we get that $(E,L)=(\bb P^{n-2}, \m O(1))$. %or $(E,L)=(\m Q^{n-2}, \m O(1))$.

\end{proof}

\begin{example} We construct a family of  examples of small contractions as in Theorem \ref{birational}.B.  We follow a construction via GIT as explained in \cite{Re92} and further in \cite{Br99}. Our examples are just higher dimensional versions of the examples of point $(4)$ of the main theorem in \cite{Br99}, to which we refer for more details.

Fix $n\ge 3$. Let $x_1,\ldots,x_{n-1},y_1,y_2,z$ be coordinates on $\C^{n+2}$ and consider the diagonal action of $\C^*$ on $\C^{n+2}$ with weights $(1,2,\ldots,2,-1,-1,0)$, that is for any $\lambda \in \C^*$ we have  $x_1 \mapsto \lambda x_1$, $x_i \mapsto \lambda^2 x_i$ for $i=2,\ldots,n-1$, $y_j \mapsto \lambda^{-1} y_j$ for $j=1,2$ and $z \mapsto z$.

Let 
$$
f=x_1y_1 + (x_2+\ldots + x_{n-1})y_2^2 + z^k
$$ 
with $k \ge 0$ and consider the hypersurface $A: \{f=0\} \subset \C^{n+2}$.  In the notation of \cite{Br99}, we are considering an action of type $(1,2,\ldots,2,-1,-1,0;0)$.  

Setting $B^-=A \cap \{x_1=\ldots=x_{n-1}=0 \}$ and $B^+=A \cap \{y_1=y_2=0 \}$ we can define $X= A \git \C^*$, $X^-= A^- \git \C^*$ and $X^+ = A^+ \git \C^*$ to  obtain the diagram 

\[
\xymatrix{X^- \ar @{-->}[rr]  \ar[rd]_{f^-} && X^+ \ar[ld]^{f^+} \\ & X}
\]

It is not difficult to check that this construction gives  a flip $X^- \dto X^+$ with exceptional loci $E^-=\bb P(1,2,\ldots,2) \cong \bb P^{n-2}$ and $E^+=\bb P^1$. Since $K_{X^-} \sim \m O(2n-5)$ we obtain that the contraction $f^-$ is supported by $2K_{X^-}+(2n-5)L$, where $L=\m O(2)$.  Finally, note that the singular locus of $X^+$ is of the form $ \C^{n-3} \times P $ where
$$
P=0 \in (x_1y_1 + y_2^2 + z^k)/\Z_2(1,1,1,0)
$$
is a $cA/2$ singularity.

 \end{example}
 
  \begin{remark} \label{Small} 
 Let $f:X\ra Z$, $L$ and $\tau$ be as in Theorem \ref{birational}. . Assume also that  $\dim E \le n-3$ (in particular $f$ is small).
 It follows  by \cite[Theorem 2.1(II.ii)]{And95} and \cite[Lemma 2.1]{BHN13} that $E$ is irreducible, it is contained in the non-Gorenstein locus of $X$,  is  contracted to a point and $(E,L_{|E})=(\bb P^{n-3}, \m O(1))$. 
 \end{remark}

\end{document}